\def\RR{\mathbb{R}}
\def\RH2{\RR H^2}
\def\gg{\mathfrak{g}}
\def\t{\texttt{T}}
\DeclareMathOperator{\diag}{diag}
\DeclareMathOperator{\aff}{aff}
\newtheorem{theorem}{{Theorem}}[section]
\newtheorem{corollary}{{Corollary}}[section]
\newtheorem{proposition}{{Proposition}}[section]
\theoremstyle{definition}
\newtheorem{remark}{{Remark}}[section]
\newtheorem{definition}{{Definition}}[section]
\newenvironment{example}
  {\pushQED{\qed}\examplex}
  {\popQED\endexamplex}
\numberwithin{equation}{section}
\title{Geodesically Equivalent Metrics on Homogenous Spaces}
\author{Neda Bokan, Tijana Sukilovic, Srdjan Vukmirovic }
\date{24. December, 2017.}
\begin{document}
\maketitle

\abstract
Two metrics on a manifold are geodesically equivalent if sets of their unparameterized geodesics coincide. In this paper we show that if two left $G$-invariant metrics of arbitrary signature on homogenous space $G/H$ are geodesically equivalent, they are affinely equivalent, i.e. they have the same Levi-Civita connection. We also prove that existence of non-proportional, geodesically equivalent, $G$-invariant metrics on homogenous space $G/H$  implies that their holonomy algebra cannot be full. We give an algorithm for finding all left invariant metrics geodesically equivalent to a given left invariant metric on a Lie group. Using that algorithm we prove that no two left invariant metric, of any signature, on sphere $S^3$ are geodesically equivalent.
However, we present examples of Lie groups that admit geodesically equivalent, non-proportional, left-invariant metrics.
\endabstract

{Keywords: invariant metric, geodesically equivalent  metrics, affinely equivalent metrics}

{Subject classification: 53C22, 22E15, 53C30}

%%%%%%%%%%%%%%%%%%%%%%%%%%%%%%%%%%%%%%%%%%%%%%%%%%%%%%%%%%%%%%
\section*{Introduction}

Let $(M^n,g)$ be a connected Riemannian  or
pseudo-Riemannian manifold of dimension $n\geq2$. We say that a
metric $\bar g$ on $M^n$ is \emph{geodesically equivalent} to $g$,
if every geodesic of $g$ is a reparameterized geodesic of $\bar
g$. We say that they are \emph{affinely equivalent}, if their
Levi-Civita connections coincide. We call a metric $g$
\emph{geodesically rigid}, if every metric $\bar g$, geodesically
equivalent to $g$, is proportional to $g$ (by the result of H.~Weyl  the coefficient of proportionality is a constant).
The geodesical equivalence is considered for metrics of arbitrary signature.
Moreover, the condition that $g$ and $\bar g$ are geodesically equivalent is linear in the sense that all metrics $\lambda g + \mu \bar g$, for constant $\lambda$ and $\mu$, are also geodesically equivalent to $g.$

Geodesically equivalent metrics are actively discussed
in the realm of general relativity
and have a close relation  with integrability and superintegrability (see \cite{Matveev2010} and references therein).
Also, there exists a connection between geodesical equivalence and holonomy groups (see for example \cite{Hall,Hall1,Wang2013}).

 The existence of geodesically equivalent metrics is quite restrictive. For example, if the complete metric $g$ is Einstain of non-constant sectional curvature and admits geodesically equivalent metric, then these two metrics are affinely equivalent \cite{MatveevRigid}. If the dimension of manifold is $3$ or $4$ the completeness condition can be dropped. By the results of N.~S.~Sinjukov \cite{Sinjukov1954} geodesically equivalent metrics on symmetric spaces are affinely equivalent.

Our research of geodesically equivalent metrics on Lie groups has been initiated by comprehensive discussions with Vladimir Matveev
during the visit to  Friedrich-Schiller-University Jena in 2013,  within DAAD cooperation program.
It came to our attention that the geodesically equivalent metrics have not been much considered in the frame of Lie groups.
In the Riemannian case, P.~Topalov \cite{topalov} considered the conditions under which the left invariant metric on a Lie group admits a non-trivial geodesically equivalent metric.
In our paper \cite{BSV} we classified left invariant metrics of Lorentz signature on 4-dimensional nilpotent Lie groups and noticed that if two of these metrics are geodesically equivalent they have to be affinely equivalent.
The  same is true for much wider class of metrics, namely for any two $G$-invariant metrics on homogenous space $G/H$. It is a very easy consequence of classical formulas which we prove in Theorem \ref{th:main}. Its strange that that simple fact is not stated or used in the  literature so far.

The structure of the paper is following.
First, in Section \ref{sec:prel}, we introduce the basic notation and recall some well known conditions for geodesical equivalence of metrics.

In Section \ref{sec:hom} we give a proof of   Theorem \ref{th:main} and draw some easy consequences.
In Theorem \ref{th:hol} we prove that existence of non-proportional geodesically equivalent metrics on homogenous space $G/H$ implies that the holonomy algebra cannot be full. Consequently, the curvature operator cannot have the maximal rank.

In Section \ref{sec:lie} we focus on geodesically equivalent left invariant metrics on Lie groups, that are special case of homogenous manifold.
In Proposition \ref{pr:method}  we prove an effective algorithm for finding the  set of all geodesically equivalent left invariant metrics to a given metric on a Lie group.
Using that method in Theorem \ref{th:s3} we show that three dimensional sphere $S^3$ as a Lie group does not have left invariant geodesically equivalent left invariant metric, of any signature.

In Example \ref{ex:g4} we show that there exists an indecomposable metric on a Lie group with affinely equivalent metrics (both left invariant). Finally, in Example \ref{ex:rh3} we show that on the same Lie group there exist both geodesically rigid and Riemannian metrics admitting affinely equivalent metrics.

%%%%%%%%%%%%%%%%%%%%%%%%%%%%%%%%%%%%%%%%%%%%%%%%%%%%%%%%%%%%%%
\section{Preliminaries}
\label{sec:prel}

As it was already known to Levi-Civita \cite{Levi-Civita1896}, two
connections $\nabla=\{ \Gamma_{jk}^i\}$ and $\bar \nabla
=\{\bar{\Gamma}_{jk}^i\}$ have the same unparameterized geodesics,
if and only if their difference is pure trace: there exists a
covector $\phi_i$ such that
\begin{align}\label{eq:conn}
\bar\Gamma_{jk}^i=\Gamma_{jk}^i+\delta_k^i\phi_j+\delta_j^i\phi_k \ .
\end{align}
The reparametrization of the geodesics for $\nabla$ and $\bar
\nabla$ connected by \eqref{eq:conn} is done according to the
following rule: for a parameterized geodesic $\gamma(\tau)$ of
$\bar \nabla$, the curve $\gamma(\tau(t))$ is a parameterized
geodesic of $\nabla$, if and only if the parameter transformation
$\tau(t)$ satisfies the following ODE:
\begin{align*}
\phi_\alpha \dot{\gamma}^\alpha =
\frac{1}{2}\frac{d}{dt}\left(\log \left| \frac{d\tau}{dt}\right|
\right) \ .
\end{align*}
We denote by $\dot{\gamma}$ the velocity vector of $\gamma$ with
respect to the parameter $t$.

If $\nabla$ and $\bar \nabla$ related by \eqref{eq:conn} are
Levi-Civita connections of metrics $g$ and $\bar g$, respectively,
then one can find explicitly (following Levi-Civita
\cite{Levi-Civita1896}) a function $\phi$ on the manifold such
that its differential $\phi_{,i}$ coincides with the covector
$\phi_i.$ Indeed, contracting \eqref{eq:conn} with respect to $i$
and $j$, we obtain $\bar \Gamma _{\alpha i}^\alpha=\Gamma_{\alpha
i}^\alpha + (n+1)\phi_i$. From the other side, for the Levi-Civita
connection $\nabla$ of a metric $g$ we have $\Gamma_{\alpha
k}^\alpha = \frac{1}{2}\frac{\partial \log \left| \det (g)\right|
}{\partial x_k}.$ Thus
\begin{align}\label{eq:pari}
\phi_i = \frac{1}{2(n+1)}\frac{\partial}{\partial x_i}\log \left| \frac{\det (\bar g)}{\det (g)}\right|= \phi_{,i}
\end{align}
for the function $\phi:M\longrightarrow \RR$ given by
\begin{align}\label{eq:par}
\phi: = \frac{1}{2(n+1)}\log \left| \frac{\det(\bar g)}{\det (g)}\right| \ .
\end{align}
In particular, the derivative of $\phi_i$ is symmetric: $\phi_{i,j}=\phi_{j,i}$.

There is another convenient way to consider geodesically equivalent metrics extensively used by many authors (see for example \cite{Matveev2009,MatveevRigid,Sinjukov1954}), that we do not use in this paper.
Namely, if $(1,1)$-tensor $A=A(g, \bar g)$ is defined in the following way:
\begin{align*}
  A^i_j(g, \bar g) = \left| \frac{\det (\bar g)}{\det (g)}\right|^\frac{1}{n+1}\bar{g}^{ik}g_{kj},
\end{align*}
then the condition of geodesical equivalence of metrics $g$ and $\bar g$ can be formulated in terms of the linear system of PDEs on the components of the tensor $A$.

%%%%%%%%%%%%%%%%%%%%%%%%%%%%%%%%%%%%%%%%%%%%%%%%%%%%%%%%%%%%%%
\section{$G$-invariant metrics on homogenous space $G/H$}
\label{sec:hom}

\begin{theorem}\label{th:main}
If two $G$-invariant metrics on a homogenous space $G/H$ are geodesically equivalent then they are affinely equivalent.
\end{theorem}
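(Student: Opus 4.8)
The plan is to show that geodesic equivalence forces the function $\phi$ of \eqref{eq:par} to be constant, so that the covector $\phi_i$ vanishes and \eqref{eq:conn} degenerates to an equality of Levi-Civita connections. So the first thing I would do is observe that, although $\log\bigl|\det(\bar g)/\det(g)\bigr|$ is written in local coordinates, the quantity $\det(\bar g)/\det(g)$ is a genuine smooth function on $M$: under a change of coordinates both $\det(g)$ and $\det(\bar g)$ are multiplied by the same factor (the square of the Jacobian determinant), so the ratio — and hence $\phi:M\to\RR$ of \eqref{eq:par} — is coordinate-free. By \eqref{eq:pari}, the differential $\phi_{,i}$ of this function is exactly the covector $\phi_i$ appearing in \eqref{eq:conn}.

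Next I would use homogeneity. Since $G$ acts on $G/H$ by isometries of both $g$ and $\bar g$, for every $a\in G$ the pullback by $a$ fixes each of the two metrics; as $\det(\bar g)/\det(g)$ is an invariantly defined function built from $g$ and $\bar g$, it is therefore $G$-invariant, and so is $\phi$. Because $G$ acts transitively on $G/H$, any $G$-invariant function on $G/H$ is constant; hence $\phi$ is constant on $M$, so $d\phi=0$, i.e. $\phi_i\equiv 0$.

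Finally, substituting $\phi_i=0$ into \eqref{eq:conn} gives $\bar\Gamma^i_{jk}=\Gamma^i_{jk}$ at every point, which is precisely affine equivalence of $g$ and $\bar g$. The whole argument is short and rests only on the classical Levi-Civita formulas recalled in Section \ref{sec:prel}; I do not expect a genuine obstacle here. The one point that deserves (routine) care is the coordinate-independence of the density ratio $\det(\bar g)/\det(g)$, since it is this invariance that makes the passage "$G$-invariant $\Rightarrow$ constant" legitimate and thereby drives the entire proof.
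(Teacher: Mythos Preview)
Your argument is correct and is essentially the paper's own proof: use $G$-invariance and transitivity to conclude that $\phi$ in \eqref{eq:par} is constant, hence $\phi_i\equiv 0$, hence \eqref{eq:conn} reduces to $\bar\Gamma^i_{jk}=\Gamma^i_{jk}$. Your extra remark on the coordinate-independence of $\det(\bar g)/\det(g)$ only makes explicit a routine point the paper leaves implicit.
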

\begin{proof}
Let  $g$ and $\bar g$ be geodesically equivalent $G$-invariant metrics on $G/H .$ Since $G$ acts transitively and isometrically on  $G/H$ the  function $\phi$ from \eqref{eq:par} is constant. Therefore, components  $\phi_k$ of its covariant derivative are identically equal to zero. From formula \eqref{eq:conn} we have $\bar\Gamma_{jk}^i=\Gamma_{jk}^i,$ that is metrics $g$ and $\bar g$ have the same Levi-Civita connection so they are affinely equivalent.
\end{proof}

\begin{definition}
We say that a $G$-invariant metric $g$ on a homogenous space  $G/H$ is {\em invariantly rigid} if the only $G$-invariant metrics affinely equivalent to $g$ are metrics of the form $\lambda g,\, \lambda \neq 0$.
\end{definition}

\begin{proposition}\label{th:prop}
The indecomposable $G$-invariant  Riemannian metric on a homogenous space $G/H$ is invariantly rigid.
\end{proposition}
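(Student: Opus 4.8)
The plan is to exploit the fact that affine equivalence forces $\bar g$ to be parallel for the Levi-Civita connection of $g$, and then to invoke the de Rham decomposition theorem. Suppose $\bar g$ is a $G$-invariant metric affinely equivalent to $g$, and let $\nabla$ be their common Levi-Civita connection. Since $\nabla$ is also the Levi-Civita connection of $\bar g$, we have $\nabla \bar g = 0$; thus the symmetric $(0,2)$-tensor $\bar g$, and with it the $g$-selfadjoint $(1,1)$-tensor $S := g^{-1}\bar g$ (defined by $g(Sv,w)=\bar g(v,w)$), are $\nabla$-parallel.

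Fix $p \in G/H$ and diagonalize: write $T_p(G/H) = V_1 \oplus \dots \oplus V_k$ as a $g_p$-orthogonal sum of the eigenspaces of $S_p$, with pairwise distinct eigenvalues $\lambda_1,\dots,\lambda_k$ (all nonzero, since $\bar g$ is non-degenerate). Because $S$ is $\nabla$-parallel, parallel transport along any loop based at $p$ commutes with $S_p$, hence preserves each $V_a$; so every $V_a$ is invariant under the holonomy group $\mathrm{Hol}_p$ of $\nabla$, a fortiori under the restricted holonomy group $\mathrm{Hol}^0_p$.

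Now suppose $k \geq 2$. Then $V_1$ is a proper, nonzero, $\mathrm{Hol}^0_p$-invariant subspace of $T_p(G/H)$; since $g$ is positive definite, its $g_p$-orthogonal complement is also proper, nonzero and $\mathrm{Hol}^0_p$-invariant, so by the local de Rham decomposition theorem $(G/H,g)$ splits near $p$ as a non-trivial Riemannian product, contradicting the indecomposability of $g$. Hence $k=1$, i.e. $\bar g_p = \lambda\, g_p$ for a single nonzero scalar $\lambda$. Since $S = g^{-1}\bar g$ is parallel, the identity $S = \lambda\,\mathrm{id}$ propagates along every path issuing from $p$, so $\lambda$ is constant on the connected space $G/H$ and $\bar g = \lambda g$ with $\lambda \neq 0$. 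This is exactly invariant rigidity; note that the argument never used the $G$-invariance of $\bar g$, so in fact \emph{no} metric affinely equivalent to $g$ is non-proportional to $g$.

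The only delicate point is the implication \emph{indecomposable} $\Rightarrow$ \emph{holonomy representation irreducible}, and this is precisely where positive-definiteness enters: in Riemannian signature a holonomy-invariant subspace always admits a non-degenerate orthogonal complement, which by de Rham's (or Wu's) theorem splits the metric, whereas in indefinite signature an invariant subspace may be degenerate and no splitting need occur — which is why Example \ref{ex:g4} can exhibit an indecomposable pseudo-Riemannian metric carrying a non-proportional affinely equivalent partner. Everything else is routine: the diagonalization, the observation that the eigenprojections of a parallel tensor are themselves parallel, and the holonomy principle carrying the pointwise identity $\bar g_p = \lambda g_p$ over all of $G/H$.
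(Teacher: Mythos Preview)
Your proof is correct and follows the same logical line as the paper's: observe that affine equivalence makes $\bar g$ a $\nabla$-parallel symmetric $(0,2)$-tensor, and then deduce proportionality from the indecomposability of $g$. The paper dispatches the second step in one sentence by citing Eisenhart's classical result on parallel symmetric tensors, whereas you supply a self-contained proof of that result via the eigenspace decomposition of the parallel endomorphism $g^{-1}\bar g$ and the local de~Rham theorem; your closing remarks---that positive-definiteness is precisely what makes the holonomy-invariant subspace split off (explaining why Example~\ref{ex:g4} escapes the conclusion), and that the $G$-invariance of $\bar g$ is never actually used---are correct and add a bit of insight beyond the paper's terse argument.
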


\begin{proof}
 If metrics $g$ and $\bar g$ are affinely equivalent, then $\bar g$ is parallel $(0,2)$ symmetric tensor with respect to the indecomposable Riemannian metric $g.$
 By the result of L.~P.~Eisenhart \cite{eisenhart},  $\bar g$ is proportional to $g$.
\end{proof}

\begin{example}\label{ex:sn}
Standard Riemannian metric $g_0$ on the $n$-dimensional sphere $S^n  \subset \RR ^{n+1}$ is a well known example of metric that admits \enquote{many} geodesically equivalent metrics. Namely, any matrix $A\in Sl(n, \RR)$  defines the mapping $f_A$
\begin{align}
f_A : S^n \to S^n, \quad \quad f_A (x):=\frac{Ax}{\vert Ax\vert }.
\end{align}
Since $f_A$ maps hyperplanes of $\RR ^{n+1}$ onto hyperplanes, it preserves great circles on $S^n.$ Then the pullback metric $g_A := f_A^* g_0$ is also a metric on $S^n$ that has the same geodesics as $g_0$, and therefore metrics $g_0$ and $g_A$ are geodesically equivalent. Moreover, $g_A \neq g_0$ if $A$ is not orthogonal matrix.

Note that  $g_0$ is Riemannian $SO(n+1)$ invariant metric on  $S^n = SO(n+1)/SO(n).$  From Proposition \ref{th:prop} it follows that it is invariantly rigid, i.e. non of the metrics $g_A \neq g_0$ is $SO(n+1)$ invariant. See Theorem \ref{th:s3} for more general statement regarding left invariant metrics on sphere $S^3$ viewed as a Lie group.
\end{example}

Existence of geodesically equivalent metrics has strong implications on holonomy of manifold, as next theorem shows.

 \begin{theorem}\label{th:hol}
   If $G$-invariant metric $g$ on a $G/H$ is not invariantly rigid, then it cannot have full holonomy algebra.
 \end{theorem}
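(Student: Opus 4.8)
The plan is to translate the failure of invariant rigidity into the existence of a parallel, $g$-self-adjoint field of endomorphisms that is not a multiple of the identity, and then to use the holonomy principle to contradict fullness of the holonomy algebra. Since $g$ is not invariantly rigid, there is a $G$-invariant metric $\bar g$ which is affinely equivalent to $g$ but is not of the form $\lambda g$ for a constant $\lambda$. Affine equivalence means $\bar\nabla=\nabla$, and since $\bar g$ is parallel for its own Levi-Civita connection, $\nabla\bar g=0$; as $\nabla g=0$ as well, the $(1,1)$-tensor field $S$ determined by $g(SX,Y)=\bar g(X,Y)$ is $\nabla$-parallel. Moreover $S$ is $g$-self-adjoint (because $\bar g$ is symmetric), and $S$ is not a (pointwise, hence constant) scalar multiple of $\mathrm{Id}$, for otherwise $\bar g$ would be proportional to $g$.

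Now I would invoke the holonomy principle: a parallel tensor is invariant under the (restricted) holonomy group, so at each point $x\in G/H$ the endomorphism $S_x$ commutes with the holonomy algebra $\mathfrak{hol}_x\subseteq\mathfrak{so}(T_x(G/H),g_x)$. Suppose the holonomy algebra were full, i.e. $\mathfrak{hol}_x=\mathfrak{so}(p,q)$ where $(p,q)$ is the signature of $g$ and $n=p+q$. Then $S_x$ would lie in the centralizer of $\mathfrak{so}(p,q)$ in $\mathfrak{gl}(n,\RR)$. For $n\ge 3$ that centralizer is exactly $\RR\cdot\mathrm{Id}$: after complexification $\mathfrak{so}(n,\CC)$ acts irreducibly on $\CC^n$, so by Schur's lemma its commutant in $\mathrm{End}(\CC^n)$ is $\CC\cdot\mathrm{Id}$, and intersecting with the real matrices leaves only $\RR\cdot\mathrm{Id}$; for $n=2$ the centralizer of the one-dimensional algebra $\mathfrak{so}(g_x)$ is spanned by $\mathrm{Id}$ and a generator $J$ of $\mathfrak{so}(g_x)$, and since $J$ is $g$-skew while $S_x$ is $g$-self-adjoint we again get $S_x\in\RR\cdot\mathrm{Id}$. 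Hence $S_x=\lambda(x)\,\mathrm{Id}$; parallelism of $S$ forces $\lambda$ to be locally, and thus globally on the connected $G/H$, constant, so $\bar g=\lambda g$ — contradicting the choice of $\bar g$. Therefore the holonomy algebra cannot be full.

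The whole argument is short once Theorem \ref{th:main} is in hand, so the only point requiring real care is the representation-theoretic input: that the standard representation of $\mathfrak{so}(p,q)$ on $\RR^n$, $n\ge 3$, has commutant $\RR\cdot\mathrm{Id}$ — equivalently, that $g$ is, up to a scalar, the unique symmetric bilinear form annihilated by $\mathfrak{so}(p,q)$ — together with the low-dimensional bookkeeping noted above. As an immediate corollary one also obtains that the curvature operator cannot have maximal rank: by the Ambrose--Singer theorem $\mathfrak{hol}_x$ is spanned by the operators $R(X,Y)$ and their parallel transports, so if $\mathfrak{hol}_x$ is a proper subalgebra of $\mathfrak{so}(g_x)\cong\Lambda^2 T_x(G/H)$ then the curvature operator, viewed as a map $\Lambda^2 T_x(G/H)\to\Lambda^2 T_x(G/H)$, has image in a proper subspace and so is not surjective.
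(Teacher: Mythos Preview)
Your proof is correct and takes a somewhat different route from the paper's. The paper does not introduce the parallel endomorphism $S$ or invoke Schur's lemma; instead it observes that $\mathrm{hol}(g)\subseteq so(g)\cap so(\bar g)$, so full holonomy forces $so(g)=so(\bar g)$, and then finishes with a pure linear-algebra argument: the $g$-adjoint involution $A\mapsto A^{*}$ on $gl(T_x(G/H))$ has $\pm 1$-eigenspaces $sym(g)$ and $so(g)$, and since $*$ is an isometry of the trace form $\langle A,B\rangle=\tr(AB)$ these eigenspaces are orthogonal complements of one another; hence $so(g)=so(\bar g)$ implies $sym(g)=sym(\bar g)$, so the two adjoint involutions coincide, which forces $\bar g=\lambda g$. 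The two arguments are equivalent in content --- your condition ``$S_x$ commutes with all of $so(g_x)$'' is exactly $so(g_x)\subseteq so(\bar g_x)$ --- but your route via the holonomy principle and the centralizer computation is the standard textbook one, while the paper's route avoids representation theory entirely and needs no separate treatment of $n=2$. One small remark: you do not actually need Theorem~\ref{th:main} here, since the definition of ``invariantly rigid'' is already stated in terms of affine (not merely geodesic) equivalence.
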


 \begin{proof}
    If metric $\bar g$ is $G$-invariant  geodesically equivalent metric to $g$, from Theorem \ref{th:main} it follows that they are affinely equivalent.
    Thus, their connections coincide. Furthermore, they have the same holonomy algebra $hol(g) = hol(\bar g)\subseteq so(g)\cap so(\bar g)$.

    Suppose that holonomy is full and therefore $so(g)=so(\bar g)$ and fix a point $p \in G/H.$ Any     metric $g$ defines the conjugate operator $*:\, gl(T_pG/H)\rightarrow gl(T_pG/H)$ given by $g(A^*x, y)=g(x, Ay)$ for all $x, y\in T_pG/H$.
    Note that operator $*$ is reflection satisfying $*^2=id$ with $\pm 1$ eigenspaces respectively
    \begin{align*}
      \Lambda^-(g) &= so(g) =\{ X\,|\, SX+X^tS = 0\},\\
      \Lambda^+(g) & = sym(g)  =\{ X\,|\, SX-X^tS = 0\},
    \end{align*}
    where symmetric matrix $S$ represents metric $g$ at point $p$, in fixed some basis.
    Note that $*$ is isometry under the non-degenerate inner product on $gl(\gg)$ defined by $\langle A, B\rangle = tr(AB)$ and $gl(\gg) = \Lambda^-(g) \oplus\Lambda^+(g)$ is the orthogonal decomposition.
    Since $so(g)= so(\bar g)$ or, equivalently $ \Lambda^-(g)= \Lambda^-(\bar g)$, it follows that $ \Lambda^+(g)= \Lambda^+(\bar g)$ and therefore $*$ operator does not depend on metric. This implies that $\bar g=\lambda g$ for some $\lambda\neq 0$.
 \end{proof}
 The same statement holds for any pseudo-Riemannian manifold.
 Since curvature operators belong to holonomy algebra, this explains why the full holonomy and full rank of curvature operators do not appear in results regarding relations of holonomy and geodesical equivalence of metrics (see \cite{Hall1, Wang2013}).

%%%%%%%%%%%%%%%%%%%%%%%%%%%%%%%%%%%%%%%%%%%%%%%%%%%%%%%%%%%%%%
\section{Case of left invariant metrics on a Lie group}
\label{sec:lie}

Any Lie group  $G$ with left invariant metric $g$ can be considered as a homogenous space. Therefore, from Theorem \ref{th:main} we have the following statement.

\begin{corollary}\label{th:cor}
If two  left invariant metrics on a Lie group $G$ are geodesically  equivalent they are affinely equivalent.
\end{corollary}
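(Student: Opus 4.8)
The plan is to realize the Lie group $G$ with a left invariant metric as a homogeneous space and then invoke Theorem \ref{th:main} directly. Concretely, I would take $H=\{e\}$ to be the trivial subgroup, so that $G/H = G$ as a smooth manifold, with $G$ acting on itself by left translations. This action is transitive, and a metric $g$ on $G$ is $G$-invariant with respect to this action precisely when it is left invariant in the usual sense. So the class of left invariant metrics on $G$ coincides with the class of $G$-invariant metrics on the homogeneous space $G/\{e\}$.

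Given this identification, if $g$ and $\bar g$ are two left invariant metrics on $G$ that are geodesically equivalent, they are in particular two geodesically equivalent $G$-invariant metrics on the homogeneous space $G/\{e\}$. Theorem \ref{th:main} then applies verbatim and yields that $g$ and $\bar g$ are affinely equivalent, i.e.\ share the same Levi-Civita connection. This is the entire argument.

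There is essentially no obstacle here: the only point that needs a sentence of justification is the (routine) verification that "left invariant" and "$G$-invariant for the left-translation action of $G$ on $G/\{e\}$" mean the same thing, which is immediate from the definitions. I would keep the proof to one or two lines, simply pointing out that a Lie group with a left invariant metric is the special case $H=\{e\}$ of a homogeneous space and that the statement is therefore an instance of Theorem \ref{th:main}.
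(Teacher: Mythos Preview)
Your proposal is correct and matches the paper's approach exactly: the paper simply remarks, just before stating the corollary, that a Lie group with a left invariant metric is a homogeneous space and that the statement therefore follows from Theorem~\ref{th:main}. Your explicit identification $G \cong G/\{e\}$ is precisely the content of that remark, and no further argument is needed.
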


  In the Riemannian case, under certain conditions, this result has already been known to P.~Topalov (see \cite[Proposition 2]{topalov}). In the sequel we discuss geodesical equivalence of left invariant metrics on Lie groups.

The left invariant metric $g$ on $G$  is determined by its values on the Lie algebra. For the fixed basis  $\{e_1,\dots , e_n\}$ of the Lie algebra we denote by $S = (s_{ij})$ a symmetric and constant matrix of the metric, i.e. $s_{ij} = g(e_i, e_j) ,$ $i,j\in 1, \dots , n.$
The metric $g$  and hence the matrix $S$ can be of arbitrary signature.

Levi-Civita connection $\nabla$  of metric $g$ on $G$ is given in terms of connection 1-forms  $\omega = (\omega^i_j)$ by
  $\nabla e_i = \omega^j_i e_j$, or equivalently, $\nabla e^i = -\omega^i_j e^j.$

From the condition of compatibility with the metric, $\nabla g = 0$, we get that the matrix of connection is skew symmetric with respect to $g$
\begin{align}\label{eq:s}
   S\omega+\omega^\t S = 0.
\end{align}
On the other hand, the condition that the connection is torsion-free is equivalent to
\begin{align}\label{eq:de}
    e^i([e_j, e_k]) =  (\omega^i_m\wedge e^m)(e_j, e_k) = \omega ^i_k (e_j) - \omega ^i_j (e_k),
\end{align}
where $[\cdot , \cdot]$ denotes commutator in the Lie algebra.  Therefore, the unique Levi Civita connection matrix of 1-forms $\omega , $ of the metric $g,$ is solution of  \eqref{eq:s} and \eqref{eq:de}.

\begin{proposition}\label{pr:method}
Let $g$ be a left invariant metric on a Lie group $G$ represented by a constant symmetric matrix $S$ in a left invariant basis $\{e_1,\dots e_n\}$  and $\omega$ its Levi Civita connection matrix of  1-forms. Left invariant metric $\bar g$ is  geodesically equivalent to $g$ if and only if its matrix $\bar S$ in basis $\{e_1,\dots e_n\}$  belongs to the subspace
\begin{align}\label{eq:sbar}
    \aff (S):= \{ \bar S \enskip \vert \enskip \bar S\omega+\omega^\t \bar S = 0 \}.
\end{align}
\end{proposition}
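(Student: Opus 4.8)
The plan is to reduce the statement to Corollary \ref{th:cor} together with the characterization \eqref{eq:conn}, exploiting the fact that for left invariant metrics everything is governed by constant matrices on the Lie algebra. First I would recall that by Corollary \ref{th:cor} a left invariant metric $\bar g$ is geodesically equivalent to $g$ if and only if it is affinely equivalent, i.e.\ $\bar\nabla = \nabla$; so the problem becomes: describe all left invariant metrics $\bar g$ whose Levi-Civita connection coincides with that of $g$. Since the Levi-Civita connection 1-forms $\bar\omega$ of $\bar g$ are the unique solution of the system \eqref{eq:s} and \eqref{eq:de} with $\bar S$ in place of $S$, and since the torsion equation \eqref{eq:de} depends only on the Lie bracket and not on the metric, the condition $\bar\omega = \omega$ is automatically consistent with \eqref{eq:de}; what remains is precisely the metricity condition \eqref{eq:s} written for $\bar S$ and $\omega$, namely $\bar S\omega + \omega^{\t}\bar S = 0$. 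This is exactly membership in $\aff(S)$.

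The key steps, in order, are as follows. Step one: let $\bar g$ be geodesically equivalent to $g$; by Corollary \ref{th:cor} it is affinely equivalent, so $\bar\nabla = \nabla$ and hence the connection 1-forms agree, $\bar\omega = \omega$. Step two: since $\bar\nabla\bar g = 0$, applying the compatibility identity \eqref{eq:s} to the pair $(\bar g, \bar\omega) = (\bar S, \omega)$ gives $\bar S\omega + \omega^{\t}\bar S = 0$, i.e.\ $\bar S \in \aff(S)$. Step three (the converse): suppose $\bar S$ is a symmetric matrix satisfying $\bar S\omega + \omega^{\t}\bar S = 0$. Then $\omega$ satisfies \eqref{eq:s} with $\bar S$ and, trivially, it satisfies the torsion equation \eqref{eq:de}, which does not involve the metric at all. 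By uniqueness of the Levi-Civita connection, $\omega$ is the connection matrix of $\bar g$; hence $\bar\nabla = \nabla$, so $\bar g$ is affinely equivalent to $g$, and in particular geodesically equivalent. One should also note in passing that $\aff(S)$ is indeed a linear subspace (the defining equation is linear in $\bar S$), consistent with the linearity of the geodesic equivalence condition mentioned in the introduction, and that $S \in \aff(S)$ by \eqref{eq:s}.

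I expect the main subtlety — really the only thing requiring care rather than invocation — to be the converse direction, specifically arguing that a symmetric $\bar S$ with $\bar S\omega + \omega^{\t}\bar S = 0$ genuinely produces a nondegenerate metric and that $\omega$ is its Levi-Civita connection. Nondegeneracy is not automatic: $\aff(S)$ may contain degenerate symmetric matrices, so strictly the proposition characterizes candidate matrices, and one restricts to the nondegenerate ones; I would either build this into the statement implicitly (a ``metric'' is by definition nondegenerate) or remark that the geodesically equivalent metrics are exactly the nondegenerate elements of $\aff(S)$. Granting nondegeneracy, the identification of $\omega$ as the Levi-Civita connection of $\bar g$ is immediate from the uniqueness of the solution of \eqref{eq:s}--\eqref{eq:de}, since $\omega$ solves both equations for $\bar S$. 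Everything else is a direct translation between the coordinate-free statement ``$\bar\nabla = \nabla$'' and the matrix equation defining $\aff(S)$, and no genuine computation is needed.
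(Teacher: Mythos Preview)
Your proposal is correct and follows essentially the same route as the paper's own proof: reduce geodesic equivalence to affine equivalence via Corollary~\ref{th:cor}, then use that the Levi--Civita connection 1-forms are the unique solution of \eqref{eq:s}--\eqref{eq:de}, so that $\bar\omega=\omega$ is equivalent to $\bar S\omega+\omega^{\t}\bar S=0$. Your explicit remark that \eqref{eq:de} is metric-independent, and your caveat about nondegeneracy of $\bar S$, are reasonable clarifications that the paper leaves implicit.
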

\begin{proof}

 If $\bar S \in \aff (S)$ then $\bar g$ is parallel with respect to $\omega$ and \eqref{eq:de} holds. Because of the uniqueness of Levi Civita connection we have $\bar \omega = \omega .$ The metric $\bar g$ is affinely and therefore geodesically equivalent to $g.$

  Conversly, if  $\bar g$ is geodesically equivalent to $g,$ according to Corollary \ref{th:cor} they are affinely equivalent, i.e.  $\bar \omega = \omega.$ Therefore, $\bar g$ must be parallel with respect to $\omega$ so $\bar  S \in \aff (S).$
\end{proof}

This simple proposition gives a method for finding all left invariant metrics on a Lie group, that are geodesically equivalent to a given left invariant metric. We use this fact to prove the following theorem and also in the Example \ref{ex:g4}.
\begin{theorem} \label{th:s3}
All left invariant metrics on three dimensional sphere $S^3$  are invariantly rigid.
\end{theorem}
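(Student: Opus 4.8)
The plan is to apply Proposition~\ref{pr:method}, which turns the statement into a linear-algebra problem for a single matrix. As a Lie group, $S^3$ is $SU(2)$, and $\mathfrak{su}(2)$ has a basis $\{e_1,e_2,e_3\}$ with $[e_i,e_j]=\epsilon_{ijk}e_k$. Its automorphism group is $SO(3)$ acting by rotations of $\mathrm{span}\{e_1,e_2,e_3\}$, and rotations preserve the structure constants $\epsilon_{ijk}$; since any symmetric matrix is diagonalized by an orthogonal matrix which we may take in $SO(3)$ (flip the sign of one eigenvector if the determinant is $-1$), and since $\aff(\cdot)$ transforms covariantly under such changes of basis, it is enough to prove $\aff(S)=\RR S$ for a metric of the form $S=\diag(\lambda_1,\lambda_2,\lambda_3)$ with all $\lambda_i\neq 0$, of arbitrary signs.

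Second, I would compute the Levi-Civita connection from \eqref{eq:s}--\eqref{eq:de} (equivalently from the Koszul formula for left invariant fields). Setting $p=-\lambda_1+\lambda_2+\lambda_3$, $q=\lambda_1-\lambda_2+\lambda_3$, $r=\lambda_1+\lambda_2-\lambda_3$, each $\nabla_{e_i}e_j$ has a single nonzero component, and the connection matrix is $\omega=\Gamma_1e^1+\Gamma_2e^2+\Gamma_3e^3$ with
\begin{align*}
\Gamma_1=\begin{pmatrix}0&0&0\\0&0&-\tfrac{p}{2\lambda_2}\\0&\tfrac{p}{2\lambda_3}&0\end{pmatrix},\quad
\Gamma_2=\begin{pmatrix}0&0&\tfrac{q}{2\lambda_1}\\0&0&0\\-\tfrac{q}{2\lambda_3}&0&0\end{pmatrix},\quad
\Gamma_3=\begin{pmatrix}0&-\tfrac{r}{2\lambda_1}&0\\\tfrac{r}{2\lambda_2}&0&0\\0&0&0\end{pmatrix}.
\end{align*}

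Third, by Proposition~\ref{pr:method} a left invariant metric $\bar g$ with matrix $\bar S=(\bar s_{ij})$ is geodesically equivalent to $g$ exactly when $\bar S\omega+\omega^\t\bar S=0$, that is, when $\bar S\Gamma_k+\Gamma_k^\t\bar S=0$ for $k=1,2,3$ (reading off the coefficient of each $e^k$). Expanding the $k=1$ equation gives $p\,\bar s_{ij}=0$ for all $i\neq j$ together with $p\bigl(\bar s_{33}/\lambda_3-\bar s_{22}/\lambda_2\bigr)=0$, and the $k=2,3$ equations give the cyclically symmetric relations with $q$ and $r$ in place of $p$. Since $\lambda_1,\lambda_2,\lambda_3\neq 0$ rules out $p=q=r=0$, all off-diagonal entries of $\bar S$ vanish; writing $u_i=\bar s_{ii}/\lambda_i$, what remains is
\begin{align*}
p\,(u_3-u_2)=0,\qquad q\,(u_1-u_3)=0,\qquad r\,(u_2-u_1)=0 .
\end{align*}
A short case check then completes the argument: no two of $p,q,r$ can vanish at once (for instance $p=q=0$ forces $\lambda_3=0$), so at least two of these equations are nontrivial and, chained together, they force $u_1=u_2=u_3=:\lambda$; hence $\bar S=\lambda S$, with $\lambda\neq 0$ by non-degeneracy. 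Therefore $\aff(S)=\RR S$ and $g$ is invariantly rigid.

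I expect the only real difficulty to be bookkeeping: computing the connection of an arbitrary-signature left invariant metric correctly, and then organizing the resulting homogeneous linear system so that the signature-dependent special cases (some $\lambda_i$ equal, or $p,q,r$ partly vanishing) are handled uniformly. Introducing the combinations $p,q,r$ is what makes both the connection and the final system transparent, and it collapses the case analysis to the two elementary observations above. Alternatively one could invoke Theorem~\ref{th:hol} and check that every left invariant metric on $S^3$ has full holonomy, but that route requires essentially the same computation.
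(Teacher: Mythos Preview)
Your proposal is correct and follows essentially the same route as the paper: diagonalize the metric via the $SO(3)$ automorphism group of $\mathfrak{su}(2)\cong so(3)$, compute the Levi--Civita connection forms of the diagonal metric, and apply Proposition~\ref{pr:method} to conclude $\aff(S)=\RR S$. Your introduction of $p,q,r$ and the explicit case analysis actually supply the final linear-algebra step that the paper leaves as ``one can show''.
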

\begin{proof}
Sphere $S^3$ is isomorphic to multiplicative group of unit quaternions and its Lie algebra  is isomorphic to $so (3)$ with nonzero brackets
\begin{align}
\label{eq:so3}
[e_1, e_2] = e_3, \quad  [e_2, e_3] = e_1, \quad [e_3, e_1] = e_2.
\end{align}
The automorphism group of the Lie  algebra $so(3)$ is the group $SO(3).$ Any symmetric $3\times  3$ matrix $g$ can be diagonalized up to an automorphism $F\in SO(3)$: $S \sim F^\t SF = \diag (\alpha _1, \alpha _2, \alpha _3) = S_\alpha$.
This means that, up to the automorphism of the Lie algebra $so(3)$, any left invariant metrics $g$ on the Lie group $S^3$, in the basis $\{e_1, e_2, e_3\}$,  is represented by a diagonal matrix $S_\alpha, \alpha = (\alpha _1, \alpha _2, \alpha _3),\ \alpha _i\neq 0.$
Note that the signature of metric depends on signs of $\alpha _i.$

To find all left invariant metrics geodesically equivalent to metric $g$ represented by $S_\alpha$, we use the method from Proposition \ref{pr:method}.
By straightforward calculation, using \eqref{eq:s} and \eqref{eq:de}, we obtain the nonzero connection 1-forms $\omega = (\omega _i^j)$ of the metric:
\begin{align}\label{eq:omso3}
\omega _i ^j = \frac{1}{2}\left( 1 + \frac{\alpha _j}{\alpha _i} - \frac{\alpha _j}{\alpha _k} \right)e^k , \quad \omega _j ^i = -\frac{\alpha _i}{\alpha _j} \omega _i ^j.
\end{align}
Here $(i,j,k)$ is any cyclic permutation of indices $(1,2,3)$ (note that brackets \eqref{eq:so3} also have the same symmetries) and Einstein summation convention is not used.

If left invariant metric $\bar g$ on $S^3$  represented by $3\times 3$ symmetric matrix $\bar S$ in the basis $\{e_1, e_2, e_3\}$ is geodesically equivalent to $g$, it is described by \eqref{eq:sbar} for $\omega$ given by \eqref{eq:omso3} and $S = S_\alpha.$
Note that, since $\omega$ is matrix of 1-forms, the relations in \eqref{eq:sbar} are three matrix equations and  one can show that the only solution is $\bar S = \lambda S_\alpha, \, \lambda \neq 0 ,$ i.e. the subspace $\aff (S_\alpha)$ is one-dimensional and the metric $g$ is invariantly rigid.
\end{proof}

Note that metrics from previos theorem  include the metric of constant sectional curvature ($\alpha _1 = \alpha _2 = \alpha _3$), and the metrics of Berger spheres ($\alpha _1 = \alpha _2$).
However, the metric of constant sectional curvature on sphere $S^3$ is not rigid, since there exist many metrics geodesically equivalent to it (see Example \ref{ex:sn}), but they are not left invariant.

According to Proposition \ref{th:prop} and Theorem \ref{th:s3} invariantly rigid metrics are quite common,  but there are metrics on Lie groups, discussed in the sequel,  that are not invariantly rigid.

\begin{remark}
If on a pseudo-Riemannian  manifold exists one parallel vector field  of the  form  $v = v^k e_k$ for the metric $g= g_{ij}e^i\otimes e^j$, the family of affinely equivalent metrics is then given by
\begin{align}\label{eq:aff}
\bar g = \lambda \, g  + \mu \, v^*\otimes v^*= \lambda \, \left( g_{kj} \,
e^k \otimes e^j \right)  + \mu \, \left( v_k v_j \,
e^k \otimes e^j \right),\ \lambda, \mu \in \RR,
\end{align}
where $v^*$ denotes a dual form of $v$ with respect to the metric $g$.
If there exists more than one independent parallel vector fields the family of affinely equivalent metrics can be described in a similar way (for example, see \cite[Section 5]{Wang2013}).

Note that the orthogonal subspace  $v^\perp$ is also parallel. Therefore, if $v$ is not null, we have two complementary parallel distributions spanned by $v$ and $v^\perp$ and the manifold locally splits into the product of two manifolds in the sense of de Rham. This case is not interesting since for product manifolds we can always independently scale the metrics on each factor and obtain affinely equivalent metrics.
On the other hand, if $v$ is null, then $v\subset v^\perp$. The manifold does not have to split, but we still obtain nonproportional, affinely equivalent metric.

In case of a Lie group $G,$ if metric $g$ is left invariant, and $v$ is parallel left invariant vector field, then metric \eqref{eq:aff} is also left invariant and therefore metric $g$ on $G$ is not invariantly rigid.
\end{remark}

\begin{example}
\label{ex:g4}
  Let us consider the 4-dimensional 3-step nilpotent Lie group $G_4$. The corresponding Lie algebra is given by the non-zero commutators
  \begin{align*}
  [e_1, e_2]=e_3,\quad [e_1, e_3]=e_4.
  \end{align*}
  From the classification of Lorentz metrics in \cite{BSV} we choose the indecomposable metric
        \begin{align*}
       (g_{ij})= \left(
       \begin{array}{cccc}
         0 & 0 & 0 & 1 \\
         0 & 1 & 0 & 0 \\
         0 & 0 & \alpha & 0 \\
         1 & 0 & 0 & 0 \\
       \end{array}
     \right),\quad\alpha>0.
    \end{align*}
    The vector $v = e_4$ is parallel and null. Its dual vector is $v^* = (g_{ij}v^j)e_i = e^1$. Therefore, the family of affinely equivalent metrics is
    \begin{align*}
      \bar g =  \lambda \, g  + \mu e^1\otimes e^1,
    \end{align*}
    and the given metric on the Lie group $G_4$ is not invariantly rigid. Using the method described in Proposition \ref{pr:method}
    it can be shown that these are all left invariant metrics geodesically equivalent to $g$.
    Note that vector $v$ is also parallel with respect to metric $\bar g$, since the Levi-Civita connections coincide, and it is null with respect to $\bar g$.
\end{example}

\begin{example}
\label{ex:rh3}
This example shows that on the same Lie group one can have both affinely equivalent and invariantly rigid left invariant metrics.
  Consider the Lie group $G = \RR H_3\times\RR ^+$ where $\RR H_3$ denotes the real hyperbolic space. In the basis $\{e_1, e_2, e_3, e_4\}$, the non-trivial Lie brackets of the corresponding Lie algebra are
  \begin{align*}
  [e_1, e_3] = e_1,\quad  [e_2, e_3]=e_2.
  \end{align*}
  The family of non-equivalent left invariant Riemannian metrics is given by
  \begin{align*}
   S = \left(
       \begin{array}{cccc}
         1 & 0 & 0 & \beta \\
         0 & 1 & 0 & 0 \\
         0 & 0 & \alpha & 0 \\
         \beta & 0 & 0 & 1 \\
       \end{array}
     \right),\quad\alpha>0,\ \beta\geq 0.
  \end{align*}

  For $\beta=0$ the vector $e_4$ is parallel, but since the metric is Riemannian, the metric is decomposable and therefore is not invariantly rigid.

  If $\beta>0$, the metric is invariantly rigid. Also, the metric is indecomposable and without a parallel vector field.
\end{example}

 From the previous discussion  the question of describing all invariantly rigid, left invariant metrics on Lie groups arises naturally. Also, we do not know an example of $G$-invariant metric or left invariant metric that has more than one affinely equivalent metric. Classification of homogenous spaces $G/H$ that admit more than one such metric is still an open problem.

\vspace{1em}

\thanks{
   The authors would like to thank anonymous referee for pointing out that statements hold not only in case of Lie groups, but also in case of homogenous spaces.
The research of the second and the third authors are partially
supported within the project 174012 through the
Ministry of Education, Science and Technological Development of the Republic of Serbia.
}

\small{

}

{\small
{\em Authors' addresses}:

{\em Neda Bokan}, Faculty of Mathematics, University of Belgrade, Belgrade, Serbia
 e-mail: \texttt{neda@\allowbreak matf.bg.ac.rs}.

{\em Tijana \v{S}ukilovi\'{c}}, Faculty of Mathematics, University of Belgrade, Belgrade, Serbia
 e-mail: \texttt{tijana@\allowbreak matf.bg.ac.rs}.

{\em Sr\dj{}an Vukmirovi\'{c}}, Faculty of Mathematics, University of Belgrade, Belgrade, Serbia
 e-mail: \texttt{vsrdjan@\allowbreak matf.bg.ac.rs}.
}
\end{document}